\theoremstyle{plain} 
\newtheorem*{theorem*}{Theorem}
\journal{Applied Mathematics Letters}
\begin{document}

\begin{frontmatter}



\title{Generalizing the Eigenvalue Interlacing Theorem to Pseudo-Similarity Transformations}


\author[inst2]{Julio Guillen-Garcia\corref{cor1}}
\ead{julio.guillen@urjc.es}

\author[inst1]{Manuel F. Fernández}

\author[inst2]{Roberto Gallardo-Cava}

\cortext[cor1]{Corresponding author.}

\affiliation[inst2]{%
  organization={Rey Juan Carlos University},
  city={Móstoles},
  state={Madrid},
  country={Spain}
}

\affiliation[inst1]{%
  organization={Independent Researcher},
  city={Syracuse},
  state={New York},
  country={USA}
}

\begin{abstract}
The current general form of the well-known Eigenvalue Interlacing Theorem states that, given an $N \times N$ Hermitian matrix $P$, the eigenvalues of the matrix product $Q^HPQ$ will “interlace” those of $P$ if the columns of $N \times L$ matrix $Q$, $L \leq N$, are unitary. This note further generalizes such theorem to include pseudo-similarity transformations; namely, products of the form $H^{\dagger}PH$, where $H$ is a general $N \times K$ matrix and ``$\dagger$'' represents the pseudo-inverse operation. This implies that, while the product $Q^HPQ$ is also Hermitian and is generally a deflated version of $P$, both in dimensionality and in the number of non-zero eigenvalues, such will not the case with $H^{\dagger}PH$, as this product, although generally a deflated version of $P$ in the number of non-zero eigenvalues, will not necessarily be so in dimensionality, nor will it generally be Hermitian. Thus, this note not only further generalizes the Eigenvalue Interlacing Theorem but also shows that eigenvalue interlacing may occur among Hermitian and non-Hermitian matrices and even with dimensionally-inflated matrices. 
\end{abstract}


\begin{highlights}
\item Introduces and formally defines the concept of pseudo-similarity transformation of a matrix.
\item Extends the Eigenvalue Interlacing Theorem to Hermitian matrices under pseudo-similarity transformations.
\item Shows that pseudo-similarity transformations yield eigenvalue interleaving between Hermitian and non-Hermitian matrices.
\item Demonstrates that pseudo-similarity transformations also produce eigenvalue interleaving between Hermitian and dimensionally-inflated matrices.
\end{highlights}

\begin{keyword}
eigenvalue interlacing theorem\sep
matrix deflation \sep
matrix inflation \sep
non-hermitian matrices \sep
pseudo-inverse \sep
pseudo-similarity transformations


\MSC[2020] 15A18 \sep 15A09 \sep 15A42

\end{keyword}

\end{frontmatter}



In its most basic (Cauchy's) form, the Eigenvalue Interlacing Theorem states that, given an $N \times N$ Hermitian matrix $P$ in $\mathbb{C}$ (to be assumed here, without lack of generality, of full rank $N$), with eigenvalues\footnote{The eigenvalues of a Hermitian matrix are real \cite{golubvanloan1996}.} arranged in non-decreasing order $\lambda_1 \leq \lambda_2 \leq \dots \space \lambda_{N-1} \leq \lambda_N$, and given an $N \times L$, $L \leq N$, matrix $\hat{I}$ composed of $L$ independent unit vectors (vectors of zeros except for a $1$ in the nth entry, with no two vectors having $1$s in the same location; i.e., $\hat{I}^T\hat{I} = I_{L \times L}$), we will have \cite{hwang2004}

\begin{equation}
\lambda_l \leq \mu_l \leq \lambda_{N-L+l}, \quad l = 1, 2, \space \dots, L  
\end{equation}
where the $\mu_1 \leq \mu_2 \leq \dots \space \mu_{L-1} \leq \mu_L$ , are the eigenvalues, in non-decreasing order, of compressed or “deflated” $L \times L$ matrix

\begin{equation} \label{eq:2}
T = \hat{I}^TP\hat{I} \space .
\end{equation}

Extending this property, from $\hat{I}$ to general unitary matrices, is simple enough:  Since the eigenvalues of a matrix are unaffected by similarity transformations, given an $N \times N$ unitary matrix $\hat{Q}$, the eigenvalues of $P$ will equal those of $\hat{Q}^HP\hat{Q}$.  Hence, the eigenvalues of

\begin{equation}
\hat{T} = \hat{I}^T \hat{Q}^HP\hat{Q} \hat{I} \space  ,   
\end{equation}
although not necessarily equal to those of $T$ in (\ref{eq:2}), will also interlace those of $P$. In other words, the eigenvalues of the compressed matrix

\begin{subequations}
\begin{equation}\label{eq:4a}
\hat{T} = Q^HPQ \space ,
\end{equation}
where
\begin{equation}\label{eq:4b}
Q = \hat{Q}\hat{I} \space ,
\end{equation}
\end{subequations}
will interlace those of $P$ \cite{bellman1970}.

A question would be whether this result could be generalized to the case where, instead of $\hat{Q}$ and $\hat{Q}^H$, one may use any full-rank, non-unitary $N \times N$ matrix $X$ and its inverse, as these may be used to similarity-transform $P$. That is, since $P$ and $X^{-1}PX$ have the same eigenvalues \cite{golubvanloan1996}, would the eigenvalues of  $\hat{I}^TX^{-1}PX\hat{I}$  interlace those of $P$?  The general answer to this question is “NO”, because though $P$ is Hermitian, $X^{-1}PX$ will not be so; hence, the interlacing theorem will not generally apply. Therefore, the Eigenvalue Interlacing Theorem will not generally hold under oblique projections (i.e., projections of the general form $X\hat{I}\hat{I}^T X^{-1}$).

However, there’s another way of further generalizing the interlacing property; namely, given an $N \times N$ Hermitian matrix $P$, with its eigenvalues again arranged in non-decreasing order $\lambda_1 \leq \lambda_2 \leq \dots \space \lambda_{N-1} \leq \lambda_N$, and given a general $N \times K$ matrix $H$ of rank $L$, we will have

\begin{equation} \label{eq:5}
\lambda_l \leq \eta_l \leq \lambda_{N-L+l}, \quad l = 1, 2, \space \dots, L  	
\end{equation}
where the $\eta_1 \leq \eta_2 \leq \dots \space \eta_{L-1} \leq \eta_L$ , are the eigenvalues, in non-decreasing order, of “pseudo-similarly transformed” $K \times K$ matrix

\begin{equation} \label{eq:6}
\mathcal{T} = H^{\dagger} P H \space ,  	
\end{equation}
where the superscripted “$\dagger$” is used to denote the pseudo-inverse \cite{golubvanloan1996} of $H$. More formally:

\begin{theorem*}[``Eigenvalue interlacing under pseudo-similarity transformations'']\label{thm:main}
Given an $N \times N$ Hermitian matrix $P$ and a general $N \times K$ matrix $H$ of rank $L$, both in $\mathbb{C}$, the $L$ eigenvalues of the $K \times K$ matrix $T$ resulting from the pseudo-similarity transformation $T = H^{\dagger} P H$ will interlace those of $P$.
\end{theorem*}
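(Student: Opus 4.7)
The natural strategy is to reduce the non-Hermitian, rectangular situation to the already-established unitary case (equations (\ref{eq:4a})--(\ref{eq:4b})) via the singular value decomposition of $H$. First I would write $H = U\Sigma V^H$, with $U\in\mathbb{C}^{N\times N}$ and $V\in\mathbb{C}^{K\times K}$ unitary and $\Sigma\in\mathbb{C}^{N\times K}$ containing the $L$ positive singular values of $H$ (and zeros elsewhere), so that the pseudo-inverse admits the explicit form $H^\dagger = V\Sigma^\dagger U^H$. Substituting these into (\ref{eq:6}) gives
\begin{equation*}
\mathcal{T} \;=\; V\,\Sigma^\dagger\,(U^H P U)\,\Sigma\, V^H.
\end{equation*}

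Next, I would exploit two observations. First, since $V$ is unitary, $\mathcal{T}$ is similar to $\Sigma^\dagger(U^H P U)\Sigma$, so the two share the same $K$ eigenvalues. Second, $\tilde P := U^H P U$ is Hermitian with exactly the same spectrum as $P$. Then I would partition conformally with the nonzero block of $\Sigma$: writing $\Sigma = \mathrm{diag}(\Sigma_L,0)$ with $\Sigma_L\in\mathbb{C}^{L\times L}$ invertible diagonal, and $\tilde P = \bigl[\begin{smallmatrix}\tilde P_{11}&\tilde P_{12}\\ \tilde P_{12}^H&\tilde P_{22}\end{smallmatrix}\bigr]$ with $\tilde P_{11}$ being the $L\times L$ leading principal block, a direct block multiplication yields
\begin{equation*}
\Sigma^\dagger\,\tilde P\,\Sigma \;=\; \begin{pmatrix}\Sigma_L^{-1}\tilde P_{11}\Sigma_L & 0\\ 0 & 0\end{pmatrix}.
\end{equation*}
Hence the spectrum of $\mathcal{T}$ consists of $K-L$ trivial zero eigenvalues together with the $L$ eigenvalues of $\Sigma_L^{-1}\tilde P_{11}\Sigma_L$, which by similarity coincide with the eigenvalues of the Hermitian principal submatrix $\tilde P_{11}$.

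Finally, I would invoke the unitary-case version of the interlacing theorem, namely (\ref{eq:4a})--(\ref{eq:4b}), applied with the compressing matrix $Q = U\hat{I}$, where $\hat{I}$ selects the first $L$ columns. This identifies $\tilde P_{11} = Q^H P Q$ as a deflated compression of $P$, so its eigenvalues $\eta_1\leq\cdots\leq\eta_L$ interlace those of $P$ exactly as in (\ref{eq:5}). Since these are precisely the nonzero eigenvalues of $\mathcal{T}$, the claim follows.

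The routine parts are the SVD substitution and the block computation. The main conceptual point, and the only place where care is required, is recognizing that even though $\mathcal{T}$ is neither Hermitian nor of the same size as $P$, its spectrum decomposes cleanly into the $K-L$ zero eigenvalues forced by the rank deficiency of $H$ plus $L$ \emph{real} eigenvalues that agree with those of a true Hermitian compression of $P$. Once that is in hand, the classical theorem does all the work.
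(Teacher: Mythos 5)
Your proof is correct, but it follows a genuinely different route from the paper's. The paper uses the economy QR decomposition $H = QR$ in the full-column-rank case, so that $H^{\dagger}PH = R^{-1}(Q^HPQ)R$ is exhibited directly as a similarity transform of the Hermitian compression $Q^HPQ$; it then treats rank deficiency in a second stage, writing a rank-$L$ matrix as $\tilde{H} = HV^H$ with $V$ column-unitary and arguing (in a footnote) that the extra $K-L$ eigenvalues of $V\mathcal{T}V^H$ are zero. You instead take the full SVD $H = U\Sigma V^H$, which handles the rectangular, rank-deficient, and $K \geq N$ cases in a single step: the block computation $\Sigma^{\dagger}(U^HPU)\Sigma = \operatorname{diag}(\Sigma_L^{-1}\tilde{P}_{11}\Sigma_L,\,0)$ (which I have checked and which is correct) makes the $K-L$ forced zero eigenvalues explicit and identifies the remaining $L$ eigenvalues with those of the Hermitian principal submatrix $\tilde{P}_{11} = (U\hat{I})^HP(U\hat{I})$, to which the classical unitary-compression form (\ref{eq:4a})--(\ref{eq:4b}) of the theorem applies. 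What the QR route buys is a slightly more elementary full-rank argument (no singular values needed, and $\mathcal{T}$ is visibly similar to a Hermitian matrix); what your SVD route buys is uniformity --- one decomposition, one block multiplication, no separate reduction for rank deficiency --- together with a cleaner justification of the zero eigenvalues than the paper's footnote. Both arguments ultimately rest on the same key lemma, namely interlacing for $Q^HPQ$ with $Q$ column-unitary.
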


\begin{proof} For simplicity of presentation we will begin by assuming an $N \times L$ matrix $H$, $L \leq N$, of full-rank $L$ (these assumptions will be removed later). The “economy” $QR$-decomposition \cite{golubvanloan1996} of such matrix is

\begin{equation} \label{eq:7}
H = QR \space ,  	
\end{equation}
columns and $L \times L$ matrix $R$ is upper-triangular; so, its pseudo-inverse will be \cite{golubvanloan1996}

\begin{equation} \label{eq:8}
H^{\dagger} = R^{-1}Q^H  \space ,
\end{equation}

Expressions (\ref{eq:7}) and (\ref{eq:8}) mean that (\ref{eq:6}) may be written as

\begin{equation} \label{eq:9}
\mathcal{T} = R^{-1}(Q^H PQ)R \space ,
\end{equation}
where the eigenvalues $\eta_1 \leq \eta_2 \leq \dots \space \eta_{L-1} \leq \eta_L$  of $L \times L$ deflated matrix $Q^HPQ$ would satisfy (\ref{eq:5}).  And since matrix $\mathcal{T}$ is a similarity-transformed version of $Q^HPQ$, it will have the same eigenvalues as the latter, meaning that the eigenvalues of (\ref{eq:9}) will also satisfy (\ref{eq:5}). 

Let’s now eliminate the full-rank requirement. Using the same matrices presented above, this can be done, without lack of generality, by constructing a rank-deficient $N \times K$ matrix $\tilde{H}$ as:

\begin{equation}
\tilde{H} = HV^H = QRV^H  \space ,    
\end{equation}
where $V$ is a $K \times L$, $K > L$, matrix of unitary columns, and $Q$ and $R$ are as defined above. Then, 

\begin{equation}
 \tilde{T} = \tilde{H}^{\dagger} P \tilde{H} = V (R^{-1}Q^H P Q R) V^H = V \mathcal{T} V^H \space .  
\end{equation}

Since $V$ is unitary, the $K$ eigenvalues of $\tilde{T}$ will consist of the $L$ eigenvalues of $\mathcal{T}$ and $K-L$ zeros\footnote{This can be shown by inserting matrix $\mathcal{T}$ as, let’s say, the top-left $L \times L$ portion of a $K \times K$ matrix $\hat{T}$ containing zeros everywhere else. Unitary matrix $V$ could also be conceptualized as composing the first $L$ columns of a $K \times K$ unitary matrix $\hat{V}$. Then, the $K$ eigenvalues of similarity transformation matrix $\hat{V} \hat{T} \hat{V}^H$ would clearly equal those of $\hat{T}$, which would in turn equal the $L$ nonzero eigenvalues of $\mathcal{T}$ with $K-L$ additional zeros.}; hence, the $L$ nonzero eigenvalues of $\tilde{T}$ will interlace the $N$ eigenvalues of $P$.

Observe that this result also extends to the case where $K \geq N$, meaning that the resulting $\tilde{T}$ may be of dimension larger than that of $P$. \end{proof}

\section*{Conclusions}
This short correspondence first introduced and defined the concept of the pseudo-similarity transformation of a matrix. Namely, given an $N \times N$ matrix $P$ and a general $N \times K$ matrix $H$, both in $\mathbb{C}$, the product $H^{\dagger}PH$ was defined as a pseudo-similarity transformation of $P$.  

It was then shown that, when matrix $P$ is Hermitian, the nonzero eigenvalues of the $K \times K$ pseudo-similarly transformed matrix $H^{\dagger}PH$ will interlace those of $P$.  This result constitutes a generalization than encompasses the current version of the Eigenvalue Interlacing Theorem, as the latter involves the case where $P$ is transformed by a column-unitary matrix $Q$ as in expression (\ref{eq:4a}), and since the pseudo-inverse of such $Q$ is $Q^H$, expression (\ref{eq:4a}) is subsumed in (\ref{eq:6}).

Interesting aspects of this result are: (a) the resulting matrix $H^\dagger PH$ will not generally be Hermitian, and (b) since the number $K$ of columns of $H$ is independent of any consideration (i.e., it does not depend on the dimensionality of $N$ nor on the rank of $H$), matrix $H^\dagger PH$, although perhaps deflated in rank, may be of larger dimensionality than that of $P$.  These results thus imply that, under certain conditions, the eigenvalues of non-Hermitian and of dimensionally-inflated matrices may interlace those of Hermitian ones. We believe these results will be useful not only to researchers in the various areas of applied mathematics (matrix analysis, graph or spectral theory, etc.) but also of practical interest to scientists and engineers in fields such as fluid- and thermo-dynamics, signal processing, systems and control, optimization theory, AI, etc.

\section*{Declaration of Competing Interest}
The authors declare that they have no known competing financial interests or personal relationships that could have appeared to influence the work reported in this paper.

\section*{Funding}
This research did not receive any specific grant from funding agencies in the public, commercial, or not-for-profit sectors.

\end{document}